\documentclass{article}
\usepackage{amsmath}
\usepackage{amssymb}
\usepackage{mathtools}
\usepackage{amsthm}
\usepackage{fancyhdr}
\usepackage{hyperref}
\pagestyle{fancy}
\bibliographystyle{plain}
\theoremstyle{definition}
\newtheorem{definition}{Definition}[section]
\newtheorem{theorem}{Theorem}[section]
\newtheorem{proposition}{Proposition}[section]
\newtheorem{corollary}{Corollary}[theorem]
\newtheorem{lemma}{Lemma}[theorem]
\DeclareMathOperator{\E}{\mathbb{E}}
\DeclareMathOperator{\given}{\left.\right|}

\begin{document}

% Title of paper
\title{Exponential tilting of subweibull distributions}
\author{F. William Townes\\
        Department of Statistics and Data Science\\
        Carnegie Mellon University\\
        \texttt{ftownes@andrew.cmu.edu}
        }

\lhead{Townes 2024}
\rhead{Exponential tilting of subweibull distributions}

\maketitle

\tableofcontents

\section{Introduction}

Subexponential and subgaussian distributions are of fundamental importance in the application of high dimensional probability to machine learning \cite{vershyninHighDimensionalProbabilityIntroduction2018,wainwrightHighDimensionalStatisticsNonAsymptotic2019}. Recently it has been shown that the subweibull class unifies the subexponential and subgaussian families, while also incorporating distributions with heavier tails \cite{vladimirovaSubWeibullDistributionsGeneralizing2020,kuchibhotlaMovingSubGaussianityHighdimensional2022}. Informally, a q-subweibull ($q>0$) random variable has a survival function that decays at least as fast as $\exp(-\lambda x^q)$ for some $\lambda>0$. For example, the exponential distribution is 1-subweibull and the Gaussian distribution is 2-subweibull. Here, we provide two alternative characterizations of the subweibull class and introduce a distinction between strictly and broadly subweibull distributions. As an example, the Poisson distribution is shown to be strictly subexponential ($q=1$) but not subweibull for any $q>1$. Finally, we detail the conditions under which the subweibull property is preserved after exponential tilting.

\section{Laplace transforms}
\begin{definition}
\label{def:laplace-transform}
We define the two-sided Laplace transform of a random variable $X$ with distribution function $F$ as
\[\mathcal{L}_X(t)= \E[\exp(-tX)]=\int_{-\infty}^\infty \exp(-tx)dF(x)\]
\end{definition}
We do not restrict $X$ to be nonnegative or to have a density function. In the special case that $\mathcal{L}_X(t)<\infty$ for all $t$ in an open interval around $t=0$, then $X$ has a moment generating function (MGF) which is $M_X(t)= \E[\exp(tX)]=\mathcal{L}_X(-t)$. The Laplace transform can characterize the distribution even if the MGF does not exist.
\begin{lemma}
\label{lem:laplace-unique}
If the Laplace transforms of random variables $X$ and $Y$ satisfy $\mathcal{L}_X(t) = \mathcal{L}_Y(t)$ for all $t$ in any nonempty open interval $(a,b)\subset \mathbb{R}$, not necessarily containing zero, then $X\overset{d}{=} Y$. 
\end{lemma}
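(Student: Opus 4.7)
The plan is to reduce the statement to the classical uniqueness theorem for moment generating functions (or equivalently for characteristic functions) by means of an exponential tilt. The key observation is that although the interval $(a,b)$ need not contain zero, we can always shift the problem so that it does, by reweighting $F$ with an exponential factor corresponding to some $t_0\in(a,b)$.

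Concretely, I would first fix an arbitrary $t_0\in(a,b)$. The hypothesis gives $\mathcal{L}_X(t_0)=\mathcal{L}_Y(t_0)=:c$, a common finite positive constant (positive because it is the integral of a strictly positive function against a probability measure). Define the tilted distributions $\tilde F_X$ and $\tilde F_Y$ via the Radon--Nikodym derivatives
\[
\frac{d\tilde F_X}{dF_X}(x)=\frac{e^{-t_0x}}{c},\qquad \frac{d\tilde F_Y}{dF_Y}(x)=\frac{e^{-t_0x}}{c},
\]
which are bona fide probability distributions by construction. A direct computation shows that for every $s\in(a-t_0,\,b-t_0)$, which is an open interval containing the origin, the Laplace transforms of the tilted variables satisfy
\[
\mathcal{L}_{\tilde X}(s)=\frac{\mathcal{L}_X(s+t_0)}{c}=\frac{\mathcal{L}_Y(s+t_0)}{c}=\mathcal{L}_{\tilde Y}(s).
\]
Thus $\tilde X$ and $\tilde Y$ have identical, finite moment generating functions on an open neighborhood of $0$.

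Next I would invoke the classical uniqueness theorem for MGFs defined on an open neighborhood of the origin (which follows from analyticity of the MGF in a complex strip around the imaginary axis and the identity theorem, so that the characteristic functions of $\tilde X$ and $\tilde Y$ agree everywhere, and then uniqueness of characteristic functions applies). This yields $\tilde X\overset{d}{=}\tilde Y$, i.e.\ $\tilde F_X=\tilde F_Y$. Inverting the tilt via $dF_X(x)=c\,e^{t_0x}\,d\tilde F_X(x)$ and the same identity for $Y$ recovers $F_X=F_Y$, which is the desired conclusion.

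The only delicate step is the shift-and-tilt reduction itself, which hinges on $\mathcal{L}_X(t_0)$ being finite and equal to $\mathcal{L}_Y(t_0)$; both facts are immediate from the hypothesis that the transforms agree on $(a,b)$. Everything else is a routine application of the standard uniqueness theorem for MGFs on a neighborhood of zero, so there is no substantive obstacle beyond setting up the tilt cleanly.
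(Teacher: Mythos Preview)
The paper does not give its own proof of this lemma; it simply cites Mukherjea et al.\ (2006) and moves on. Your exponential-tilt reduction is correct and is essentially the standard way to establish the result: shift by $t_0\in(a,b)$ so that the transforms of the tilted variables agree on a neighbourhood of the origin, invoke the classical MGF uniqueness theorem, and untilt.

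One small point worth tightening: you write that finiteness of $\mathcal{L}_X(t_0)$ is ``immediate from the hypothesis that the transforms agree on $(a,b)$.'' Strictly speaking the hypothesis as stated only asserts equality, and $\infty=\infty$ is a formally admissible reading under the paper's Definition~\ref{def:laplace-transform}. Under that reading the lemma is actually false (take any two distinct distributions with heavy right tails and $(a,b)\subset(-\infty,0)$). So finiteness on $(a,b)$ is an intended hypothesis, not a consequence; you should state it as an assumption rather than claim it follows. Note also that your argument needs finiteness on an open subinterval, not just at the single point $t_0$, since the MGF uniqueness theorem you invoke requires a full neighbourhood of zero. Once that is made explicit, the proof is clean.
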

For a proof refer to \cite{mukherjeaNoteMomentGenerating2006}. A random variable $X$ is considered subexponential iff the MGF exists \cite{vershyninHighDimensionalProbabilityIntroduction2018}.  
If $\mathcal{L}_X(t)=\infty$ for all $t>0$ (respectively $t<0$), $X$ is said to have a heavy left (respectively, right) tail \cite{nairFundamentalsHeavyTails2022}. If a tail is not heavy it is said to be light. It is well known that the one-sided Laplace transform characterizes nonnegative distributions \cite{fellerIntroductionProbabilityTheory1971}. Lemma \ref{lem:laplace-unique} shows that the two-sided Laplace transform characterizes any distribution with at least one light tail. 

\section{Subweibull random variables}

\begin{definition}
\label{def:subweibull}
A random variable $X$ is \textit{q-subweibull} if $\E[\exp(\lambda^q \vert X\vert^q)]<\infty$ for some $\lambda>0$. $X$ is \textit{strictly q-subweibull} if the condition is satisfied for all $\lambda>0$. 
If $X$ is q-subweibull but not strictly so, we refer to it as \textit{broadly q-subweibull}. 
\end{definition}
The first part of this definition was also proposed by \cite{kuchibhotlaMovingSubGaussianityHighdimensional2022} and by \cite{vladimirovaSubWeibullDistributionsGeneralizing2020} using a parameterization equivalent to $1/q$. Clearly $X$ is (strictly) q-subweibull if and only if $\vert X\vert^q$ is (strictly) subexponential. As an example, the Laplace distribution is broadly 1-subweibull (ie broadly subexponential).

% Suppose $X$ is a random variable with distribution function $F$ and $p=\Pr(X<0)\notin \{0,1\}$. Define nonnegative random variables $A = [-X\given X<0]$ and $B = [X\given X\geq 0]$ with distribution functions $F^-$ and $F^+$, respectively. If $g$ is a nonnegative function then
% \begin{equation}
% \label{eq:gx-split}
% \begin{split}
% \E[g(X)] &= \E[g(X)\given X<0]p + \E[g(X)\given X\geq 0](1-p)\\
% &= \E[g(-A)]p + \E[g(B)](1-p)
% \end{split}
% \end{equation}

\begin{definition}
\label{def:radius-convergence}
The \textit{radius of convergence} of a q-subweibull random variable $X$ is defined by
\[R_q = \sup \left\{\lambda>0:\E[\exp(\lambda^q \vert X\vert^q)]<\infty\right\}\]
and if no such $\lambda>0$ exists we adopt the convention that $R_q=0$.
\end{definition}
In the case of strictly q-subweibull distributions, $R_q=\infty$. 
% Special cases include subexponential ($q=1$) and subgaussian ($q=2$) random variables. 
% See \cite{kuchibhotlaMovingSubGaussianityHighdimensional2022,vladimirovaSubWeibullDistributionsGeneralizing2020} for the case of $q<1$. 
$X$ has ``heavy tails'' (in the sense of \cite{nairFundamentalsHeavyTails2022}) iff it is not subexponential ($R_1=0$).
% We now generalize Propositions 2.5.2 and 2.7.1 of \cite{vershyninHighDimensionalProbabilityIntroduction2018} on subexponential and subgaussian random variables to the q-subweibull case. The proof is by a similar technique. 

\begin{lemma}
\label{lem:subweibull-split}
Random variable $X$ with $\Pr(X<0)\notin \{0,1\}$ is q-subweibull if and only if the nonnegative random variables $A = [-X\given X<0]$ and $B = [X\given X\geq 0]$ are q-subweibull. Let $R_{qx}$, $R_{qa}$, and $R_{qb}$ denote the radii of convergence for $X$, $A$, and $B$, respectively. Then $R_{qx}=\min\{R_{qa},R_{qb}\}$.
\end{lemma}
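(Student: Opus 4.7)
The plan is to reduce everything to a single identity obtained by conditioning. Let $p=\Pr(X<0)\in(0,1)$. By the law of total expectation, for any $\lambda>0$,
\[
\E[\exp(\lambda^q|X|^q)] = p\cdot\E[\exp(\lambda^q|X|^q)\mid X<0] + (1-p)\cdot\E[\exp(\lambda^q|X|^q)\mid X\geq 0].
\]
On the event $\{X<0\}$ we have $|X|^q = (-X)^q$, so the first conditional expectation equals $\E[\exp(\lambda^q A^q)]$; on the event $\{X\geq 0\}$, $|X|^q = X^q$, so the second equals $\E[\exp(\lambda^q B^q)]$. Thus
\[
\E[\exp(\lambda^q|X|^q)] = p\cdot\E[\exp(\lambda^q A^q)] + (1-p)\cdot\E[\exp(\lambda^q B^q)].
\]
Since $p$ and $1-p$ are both strictly positive, the left side is finite if and only if both summands on the right are finite, yielding the iff statement.

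For the radii, I would argue that the sets $S_X,S_A,S_B\subset(0,\infty)$ of admissible $\lambda$'s satisfy $S_X = S_A\cap S_B$ (by the displayed identity) and that each set is downward closed: if $0<\lambda'<\lambda$ and $\E[\exp(\lambda^qY^q)]<\infty$, then monotonicity of $t\mapsto e^{\lambda^q t}$ in $\lambda$ gives $\E[\exp((\lambda')^qY^q)]\leq\E[\exp(\lambda^qY^q)]<\infty$. Hence each $S_A,S_B$ is an interval of the form $(0,R_{qa})$ or $(0,R_{qa}]$ (and similarly for $B$), so $\sup(S_A\cap S_B) = \min(R_{qa},R_{qb})$, which is exactly $R_{qx}$. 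The empty-set conventions in Definition~\ref{def:radius-convergence} make this work uniformly: if either $R_{qa}=0$ or $R_{qb}=0$, then $S_X=\emptyset$ forces $R_{qx}=0=\min(R_{qa},R_{qb})$.

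I do not anticipate any substantial obstacle; the only care needed is to observe that the conditioning decomposition requires $p\in(0,1)$ (hence the exclusion $\Pr(X<0)\notin\{0,1\}$ in the hypothesis, which guarantees both $A$ and $B$ are well-defined random variables and both weights are nonzero) and to keep track of whether the defining set for $R_q$ is open or closed at its endpoint — since we only take a supremum, this distinction is immaterial.
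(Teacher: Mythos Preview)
Your proof is correct and follows essentially the same approach as the paper: both derive the conditioning identity $\E[\exp(\lambda^q|X|^q)]=p\,\E[\exp(\lambda^q A^q)]+(1-p)\,\E[\exp(\lambda^q B^q)]$ and then compare radii. Your treatment of the radii via $S_X=S_A\cap S_B$ and explicit downward closedness is a slight repackaging of the paper's contradiction argument, and is arguably a touch more careful in making the monotonicity step explicit.
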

\begin{proof}
% Apply Equation \ref{eq:gx-split} with $g(x)=\exp(\lambda^q\vert x\vert^q)$. The left hand side is finite if and only if both terms on the right hand side are finite.
Let $p=\Pr(X<0)$ and define nonnegative random variables $A = [-X\given X<0]$ and $B = [X\given X\geq 0]$.
\begin{align*}
\E[\exp(\lambda^q\vert X\vert^q)] &= \E[\exp(\lambda^q (-X)^q)\given X<0]p + \E[\exp(\lambda^q X^q)\given X\geq 0](1-p)\\
&= \E[\exp(\lambda^q A^q)]p + \E[\exp(\lambda^q B^q)](1-p)
\end{align*}
The left hand side is finite if and only if both terms on the right hand side are finite. If $R_{qx}$ is the radius of convergence for $X$ then $\E[\exp(\lambda^q \vert X\vert^q)]<\infty$ for all $\lambda\in [0,R_{qx})$. Clearly $\E[\exp(\lambda^q A^q)]<\infty$ and $\E[\exp(\lambda^q B^q)]<\infty$ for all $\lambda\in [0,R_{qx})$ also, implying $\min\{R_{qa},R_{qb}\}\geq R_{qx}$. However, if $\min\{R_{qa},R_{qb}\}>R_{qx}$ then there exists some $\lambda> R_{qx}$ such that $\E[\exp(\lambda^q \vert X\vert^q)]<\infty$, which by Definition \ref{def:radius-convergence} means $R_{qx}$ is not the radius of convergence of $X$, a contradiction. Therefore, $\min\{R_{qa},R_{qb}\}=R_{qx}$. 
\end{proof}

\begin{proposition}(Theorem 1 of \cite{vladimirovaSubWeibullDistributionsGeneralizing2020})
\label{prop:subweibull}
The following are equivalent characterizations of a q-subweibull random variable $X$ for $q>0$. %$q\geq 1$.
\begin{enumerate}
  \item Tail bound: $\exists~K_{1a}>0$ such that $\forall~t\geq 0$, 
  \[\Pr(\vert X\vert > t) \leq 2\exp\big(-(t/K_{1a})^q\big)\] %1a
  % \begin{enumerate}[label=(\alph*)]
  %   \item $\exists~K_{1a}>0$ such that $\forall~t\geq 0$,
  %   \[\Pr(\vert X\vert > t) \leq 2\exp\big(-(t/K_{1a})^q\big)\] 
  %   \item $\exists~K_{1b}>0$ such that
  %   \[\limsup_{t\to\infty} \Pr(\vert X\vert > t)\exp\big((t/K_{1b})^q\big)<\infty\]
  % \end{enumerate} 
  \item Growth rate of absolute moments: $\exists~K_2>0$ such that $\forall~p\geq 1$, 
  \[\big(\E[\vert X\vert^p]\big)^{1/p}\leq K_2 p^{1/q}\] %2a
  % \begin{enumerate}[label=(\alph*)]
  %   \item $\exists~K_2>0$ such that $\forall~p\geq 1$,
  %   \[\big(\E[\vert X\vert^p]\big)^{1/p}\leq K_2 p^{1/q}\]
  %   \item \[\limsup_{p\to\infty} \frac{\big(\E[\vert X\vert^p]\big)^{1/p}}{p^{1/q}}<\infty\]
  % \end{enumerate}
  \item MGF of $\vert X\vert^q$ finite in interval of zero: $\exists~K_3>0$ such that $\forall~0<\lambda\leq 1/K_3$
    \[\E\left[\exp(\lambda^q \vert X\vert^q)\right]\leq \exp(K_3^q \lambda^q)\]
  % \begin{enumerate}[label=(\alph*)]
  %   \item $\forall~0<\lambda<\frac{2^{1/q}}{K_3}$
  %   \[\E\left[\exp(\lambda^q \vert X\vert^q)\right]\leq \frac{1}{1-\lambda^q K_3^q/2}\]
  %   \item $\forall~0<\lambda\leq 1/K_3$
  %   \[\E\left[\exp(\lambda^q \vert X\vert^q)\right]\leq \exp(K_3^q \lambda^q)\]
  % \end{enumerate}
  % \item  Finite $\psi_q$-Orlicz norm, $\Vert X\Vert_{\psi_q}\leq K_4<\infty$, so that 
  % \[\E\left[\exp\left\{\left(\frac{\vert X\vert}{K_4}\right)^q\right\}\right]\leq 2\]  
\end{enumerate}
\end{proposition}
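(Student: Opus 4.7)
The plan is to prove the standard cyclic chain $(1)\Rightarrow(2)\Rightarrow(3)\Rightarrow(1)$, which mirrors the analogous equivalence for subgaussian/subexponential random variables in Vershynin's textbook, generalized from the exponents $q\in\{1,2\}$ to arbitrary $q>0$. Throughout, the constants $K_{1a}, K_2, K_3$ are allowed to depend on one another and on $q$, and I will track only their existence, not their sharpest possible values.

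For $(1)\Rightarrow(2)$, I would apply the layer cake identity
\[\E[\vert X\vert^p]=\int_0^\infty p t^{p-1}\Pr(\vert X\vert>t)\,dt\le 2p\int_0^\infty t^{p-1}\exp\bigl(-(t/K_{1a})^q\bigr)dt,\]
and then substitute $u=(t/K_{1a})^q$ to reduce the integral to $2(p/q)K_{1a}^p\,\Gamma(p/q)$. Taking $p$-th roots and invoking Stirling's approximation $\Gamma(p/q)^{1/p}\lesssim (p/q)^{1/q}$ gives $(\E\vert X\vert^p)^{1/p}\le K_2 p^{1/q}$ for some $K_2$ depending only on $K_{1a}$ and $q$. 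The only subtlety is ensuring the Stirling bound is uniform in $p\ge 1$, which follows from standard estimates.

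For $(2)\Rightarrow(3)$, I would expand the exponential as a power series,
\[\E[\exp(\lambda^q\vert X\vert^q)]=1+\sum_{k=1}^\infty \frac{\lambda^{qk}}{k!}\E[\vert X\vert^{qk}],\]
and bound each moment. For $qk\ge 1$, condition (2) gives $\E[\vert X\vert^{qk}]\le K_2^{qk}(qk)^{k}$; when $q<1$ and $qk<1$, I would first apply Jensen's inequality to reduce to the case $p=1$, which is covered by (2). Using $k!\ge (k/e)^k$, the $k$-th term is bounded by $(eqK_2^q\lambda^q)^k$, a geometric series that converges to something of the form $\exp(K_3^q\lambda^q)$ whenever $\lambda\le 1/K_3$ for a suitable $K_3$. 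The main obstacle in the whole proof lies here: choosing $K_3$ so that the geometric series of $(eqK_2^q\lambda^q)^k$ can actually be dominated by a single exponential $\exp(K_3^q\lambda^q)$ on the full interval $\lambda\in(0,1/K_3]$. The standard trick is to use $1+x\le e^x$ term-by-term after absorbing the geometric sum into $C\lambda^q$ for $\lambda$ small enough.

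For $(3)\Rightarrow(1)$, I would apply Markov's inequality to the nonnegative random variable $\exp(\lambda^q\vert X\vert^q)$:
\[\Pr(\vert X\vert>t)\le \exp(-\lambda^q t^q)\,\E[\exp(\lambda^q\vert X\vert^q)]\le \exp(K_3^q\lambda^q-\lambda^q t^q).\]
Choosing $\lambda=1/K_3$ (the boundary of the admissible range) yields $\Pr(\vert X\vert>t)\le e\cdot \exp(-(t/K_3)^q)$. Combining this with the trivial bound $\Pr(\vert X\vert>t)\le 1$ for small $t$, I would verify that taking $K_{1a}$ to be any sufficiently large multiple of $K_3$ (depending on $q$) produces the desired bound $2\exp(-(t/K_{1a})^q)$ uniformly in $t\ge 0$. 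This step is routine once one checks the two regimes $t\le K_3$ and $t>K_3$ separately.
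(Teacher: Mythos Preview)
The paper does not supply its own proof of this proposition; it simply records the result as Theorem~1 of \cite{vladimirovaSubWeibullDistributionsGeneralizing2020} and uses it as a black box. Your cyclic argument $(1)\Rightarrow(2)\Rightarrow(3)\Rightarrow(1)$ is correct and is precisely the approach taken in that reference (and in Vershynin's treatment of the $q=1,2$ cases), so there is nothing to compare against and nothing to fix.
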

%can be omitted
When $q\geq 1$, Condition 3 is equivalent to requiring the Orlicz norm $\Vert X \Vert_{\psi_q}<\infty$ where $\psi_q(x)=\exp(x^q)-1$. The case of $q<1$ (heavy tails) is further discussed in \cite{kuchibhotlaMovingSubGaussianityHighdimensional2022}. 
It is sometimes convenient to use the following asymptotic alternatives to Proposition \ref{prop:subweibull} (1 and 2).

\begin{corollary} %can be omitted
\label{cor:subweibull}
A random variable $X$ is q-subweibull if and only if either of the following hold
\begin{enumerate}
  \item Tail bound: $\exists~K_{1b}>0$ such that %1b
    \[\limsup_{t\to\infty} \Pr(\vert X\vert > t)\exp\big((t/K_{1b})^q\big)<\infty\]
  \item Growth rate of absolute moments:
    \[\limsup_{p\to\infty} \frac{\big(\E[\vert X\vert^p]\big)^{1/p}}{p^{1/q}}<\infty\]
\end{enumerate}
\end{corollary}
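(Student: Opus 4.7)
The plan is to prove each of conditions (1) and (2) is equivalent to q-subweibull by matching it against the corresponding uniform bound in Proposition \ref{prop:subweibull}. The forward implications are immediate: if $X$ is q-subweibull, then Proposition \ref{prop:subweibull}(1) gives $\Pr(|X|>t)\exp((t/K_{1a})^q)\leq 2$ for all $t\geq 0$, so condition (1) holds with $K_{1b}=K_{1a}$, and Proposition \ref{prop:subweibull}(2) bounds the limsup in condition (2) by $K_2 < \infty$. So only the two reverse directions require work.

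For the reverse direction of (1), I would first upgrade the asymptotic tail bound to a uniform one, then verify Definition \ref{def:subweibull} directly. The hypothesis supplies $T>0$ and $C>0$ such that $\Pr(|X|>t) \leq C\exp(-(t/K_{1b})^q)$ for $t\geq T$, and using the trivial bound $\Pr(|X|>t)\leq 1$ on $[0,T)$ and absorbing $\exp((T/K_{1b})^q)$ into the constant yields $\Pr(|X|>t)\leq M\exp(-(t/K_{1b})^q)$ uniformly for some $M>0$. Applying the layer-cake identity
\[
\E[\exp(\lambda^q|X|^q)] = 1 + q\lambda^q \int_0^\infty t^{q-1}\exp(\lambda^q t^q)\Pr(|X|>t)\,dt
\]
produces an integrand proportional to $\exp((\lambda^q - 1/K_{1b}^q)t^q)$, which is integrable for any $\lambda<1/K_{1b}$. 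Hence $X$ is q-subweibull.

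For the reverse direction of (2), I would extend the eventual moment bound to all $p\geq 1$ via Lyapunov's inequality, thereby reproducing Proposition \ref{prop:subweibull}(2). The hypothesis provides $P\geq 1$ and $C>0$ with $(\E[|X|^p])^{1/p}\leq Cp^{1/q}$ for $p\geq P$. Since $p\mapsto(\E[|X|^p])^{1/p}$ is non-decreasing in $p$, for $1\leq p\leq P$ we have $(\E[|X|^p])^{1/p}\leq (\E[|X|^P])^{1/P}\leq CP^{1/q}\leq CP^{1/q}\cdot p^{1/q}$, using $p^{1/q}\geq 1$. Taking $K_2 = CP^{1/q}$ gives the uniform bound required by Proposition \ref{prop:subweibull}(2), so $X$ is q-subweibull.

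The main obstacle, if any, is mild: in both directions the real task is converting an asymptotic bound into a uniform one. For (1) this is resolved by the trivial bound $\Pr(|X|>t)\leq 1$ on the bounded set $[0,T)$; for (2) it is resolved by Lyapunov monotonicity of $L^p$ norms. The remaining calculations (a layer-cake integral in (1), a direct inequality in (2)) are standard and produce no difficulty.
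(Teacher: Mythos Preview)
Your proposal is correct, and for the forward implications and the reverse direction of (2) it matches the paper's proof essentially verbatim (Lyapunov monotonicity of $L^p$ norms on $[1,p^\star]$ plus the eventual bound for $p>p^\star$). The only genuine difference is in the reverse direction of (1): after obtaining the uniform bound $\Pr(|X|>t)\leq M\exp(-(t/K_{1b})^q)$, the paper stays within Proposition~\ref{prop:subweibull} and massages the constant $M$ down to $2$ by enlarging $K_{1b}$ to $K_{1a}=K_{1b}(\log M/\log 2)^{1/q}$, whereas you bypass Proposition~\ref{prop:subweibull}(1) entirely and feed the uniform bound into the layer-cake identity to verify Definition~\ref{def:subweibull} directly. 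Your route is arguably more self-contained (it does not rely on the equivalence $(1)\Leftrightarrow(3)$ inside Proposition~\ref{prop:subweibull}); the paper's route has the advantage of recovering the tail bound in the exact form of Proposition~\ref{prop:subweibull}(1), which may be convenient downstream. Both are short and standard.
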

\begin{proof}
Proposition \ref{prop:subweibull} $(1)\implies (1)$ :
\[
\limsup_{t\to\infty} \Pr(\vert X\vert > t)\exp\big((t/K_{1a})^q\big)\leq \sup_{t\geq 0} \Pr(\vert X\vert > t)\exp\big((t/K_{1a})^q\big)\leq 2<\infty
\]
So we can simply set $K_{1b}=K_{1a}$.

$(1b)\implies$ Proposition \ref{prop:subweibull} $(1a)$:
Assume 
\[\limsup_{t\to\infty} \Pr(\vert X\vert > t)\exp\big((t/K_{1b})^q\big)=K\]
Then, for every $C>K$, there exists some $T$ such that for all $t>T$, 
\[\Pr(\vert X\vert > t)\exp\big((t/K_{1b})^q\big)<C\]
For all $t\in[0,T]$, $\Pr(\vert X\vert > t)\leq 1$ and $\exp\big((t/K_{1b})^q\big)\leq \exp\big((T/K_{1b})^q\big)$. Therefore 
\[\sup_{t\geq 0} \Pr(\vert X\vert > t)\exp\big((t/K_{1b})^q\big)\leq \max\left\{C,~\exp\big((T/K_{1b})^q\big)\right\}\]
Let $U = \max\left\{C,~\exp\big((T/K_{1b})^q\big)\right\}$. If $U\leq 2$ this directly implies $(1a)$ with $K_{1a}=K_{1b}$. In the case that $U>2$, set 
\[K_{1a}=K_{1b}\left(\frac{\log U}{\log 2}\right)^{1/q}> K_{1b}\]
Let $f(t)= U\exp\big(-(t/K_{1b})^q\big)$, $g(t)=2\exp\big(-(t/K_{1a})^q\big)$, and $T^\star=K_{1b} (\log U)^{1/q}$. Since $f(T^\star)=g(T^\star)=1$ and $g(t)$ is a strictly decreasing function, this implies that $\Pr(\vert X\vert > t)\leq 1\leq g(t)$ for $t\in[0,T^\star]$. For $t\geq T^\star$, $g(t)\geq f(t)$ since $K_{1a}>K_{1b}$, and $f(t)\geq \Pr(\vert X\vert > t)$ by assumption therefore $2\exp\big(-(t/K_{1a})^q\big)\geq \Pr(\vert X\vert > t)$ for all $t\geq 0$.

Proposition \ref{prop:subweibull} $(2)\implies (2)$: 
\[\limsup_{p\to\infty} \frac{\big(\E[\vert X\vert^p]\big)^{1/p}}{p^{1/q}}\leq \sup_{p\geq 1} \frac{\big(\E[\vert X\vert^p]\big)^{1/p}}{p^{1/q}}\leq K_2 < \infty\]

$(2)\implies$ Proposition \ref{prop:subweibull} $(2)$: 
Assume \[\limsup_{p\to\infty} \frac{\big(\E[\vert X\vert^p]\big)^{1/p}}{p^{1/q}}=K<\infty\]
Then for every $C>K$, there exists some $p^\star$ such that for all $p>p^\star$,
\[\frac{\big(\E[\vert X\vert^p]\big)^{1/p}}{p^{1/q}}<C\]
The $L_p$ norm is increasing in $p$, so for $p\in[1,p^\star]$, $\big(\E[\vert X\vert^p]\big)^{1/p}\leq \big(\E[\vert X\vert^{p^\star}]\big)^{1/p^\star}$ and $p^{1/q}\geq 1$, which establishes
\[\sup_{p\geq 1}\frac{\big(\E[\vert X\vert^p]\big)^{1/p}}{p^{1/q}}\leq \max\left\{\big(\E[\vert X\vert^{p^\star}]\big)^{1/p^\star},~C\right\}\]
\end{proof}

It was shown by \cite{vladimirovaSubWeibullDistributionsGeneralizing2020} that a q-subweibull distribution is also r-subweibull for all $r<q$. We now show that this also implies it is strictly r-subweibull.

\begin{corollary}
\label{cor:strict-subweib}
If $X$ is q-subweibull then it is strictly r-subweibull for all $r\in (0,q)$. 
\end{corollary}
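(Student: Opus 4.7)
The plan is to show that for any chosen $\mu>0$ and any $r\in(0,q)$, the expectation $\E[\exp(\mu^r|X|^r)]$ is finite, by dominating $\mu^r|X|^r$ pointwise by something of the form $\lambda^q|X|^q + (\text{const})$ where $\lambda$ can be chosen as small as we like.

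The key elementary fact I would first establish is that for every $\epsilon>0$ and every $a\geq 0$,
\[a^r \leq \epsilon\,a^q + C(\epsilon), \qquad C(\epsilon)=\epsilon^{-r/(q-r)}.\]
To see this, I would split on whether $a\leq T$ or $a>T$ for the threshold $T=\epsilon^{-1/(q-r)}$: in the first case $a^r\leq T^r=C(\epsilon)$, and in the second $a^q = a^{q-r}a^r > T^{q-r} a^r = \epsilon^{-1}a^r$, i.e.\ $a^r<\epsilon a^q$. This step exploits $q>r$ in an essential way and is where the inequality $r<q$ enters.

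Next, I would apply this with $a=|X|$ and multiply through by $\mu^r$ to obtain
\[\mu^r|X|^r \;\leq\; \mu^r\epsilon\,|X|^q + \mu^r C(\epsilon),\]
and hence
\[\E\bigl[\exp(\mu^r|X|^r)\bigr] \;\leq\; \exp\bigl(\mu^r C(\epsilon)\bigr)\cdot \E\bigl[\exp(\mu^r\epsilon\,|X|^q)\bigr].\]
Since $X$ is $q$-subweibull, its radius of convergence $R_q$ is strictly positive, so I can choose $\epsilon>0$ small enough that $\mu^r\epsilon<R_q^q$, i.e.\ $(\mu^r\epsilon)^{1/q}<R_q$. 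By Definition \ref{def:radius-convergence} the right-hand expectation is then finite, and multiplying by the finite constant $\exp(\mu^r C(\epsilon))$ preserves finiteness. Since $\mu>0$ was arbitrary, $X$ satisfies the strict $r$-subweibull condition of Definition \ref{def:subweibull}.

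The main obstacle, although modest, is the elementary comparison $a^r\leq\epsilon a^q+C(\epsilon)$: one needs to notice that both the additive constant $C(\epsilon)$ and the coefficient $\epsilon$ can be made to scale independently, so that shrinking $\epsilon$ (to gain finiteness of the $|X|^q$ MGF evaluation) only inflates a harmless additive constant. Everything else reduces to the existence of $R_q>0$ and monotonicity of $\exp$.
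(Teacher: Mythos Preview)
Your proof is correct and takes a genuinely different route from the paper. The paper proceeds via the moment-growth characterization of Proposition~\ref{prop:subweibull}(2): it assumes $(\E[|X|^p])^{1/p}\le Kp^{1/q}$, expands $\E[\exp(\lambda^r|X|^r)]$ as a power series in $\lambda$, bounds the coefficients using the moment estimate and $p!\ge(p/e)^p$, and then applies the root test to see that the series converges for every $\lambda$ when $r<q$. Your argument instead stays entirely within Definition~\ref{def:subweibull}: the pointwise Young-type bound $a^r\le\epsilon a^q+C(\epsilon)$ lets you dominate $\exp(\mu^r|X|^r)$ by a constant times $\exp(\mu^r\epsilon|X|^q)$, and then you simply take $\epsilon$ small enough to land inside the region where the $q$-MGF is known to be finite. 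Your approach is more elementary and self-contained (it does not invoke the equivalence in Proposition~\ref{prop:subweibull}), while the paper's series computation makes the role of the exponent gap $r/q-1<0$ more visibly quantitative. One very small elision: you appeal to Definition~\ref{def:radius-convergence} to conclude finiteness for $\lambda<R_q$, which strictly requires the (obvious) monotonicity of $\lambda\mapsto\E[\exp(\lambda^q|X|^q)]$; this is harmless.
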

\begin{proof}
If $X$ is q-subweibull, by Proposition \ref{prop:subweibull} we may assume there exists $K>0$ such that $\forall~p\geq 1$,
\[\big(\E[\vert X\vert^p]\big)^{1/p}\leq K p^{1/q}\]
Let $r\in (0,q)$. The MGF of $\vert X\vert^r$ is given by
\begin{align*}
\E\left[\exp(\lambda^r \vert X\vert^r)\right] &= 1+\sum_{p=1}^\infty \frac{\lambda^{pr} E[\vert X\vert^{pr}]}{p!}\\
&\leq 1+\sum_{p=1}^\infty \frac{\lambda^{pr}K^{pr} (pr)^{pr/q}}{(p/e)^p}\\ 
&= 1+\sum_{p=1}^\infty \big(\lambda^r K^r e r^{r/q}\big)^p p^{p(r/q-1)}
\end{align*}
Apply the root test to the series to determine convergence.
\begin{align*}
R(p) = \lambda^r K^r e r^{r/q} p^{r/q-1}
\end{align*}
% If $r>q$ then $\lim_{p\to\infty} R(p) = \infty$ and the series diverges. %this means we don't know anything about |X|^r
% If $r=q$ then $R(p)= \lambda^r K^q e q^{(1)}$ for all $p$ and the series converges if $\lambda<K^{-1}(eq)^{-1/q}$.
Since $r<q$, then $\lim_{p\to\infty} R(p) = 0$ and the series converges regardless of the value of $\lambda$, which shows $X$ is strictly r-subweibull.
\end{proof}

\begin{corollary}
\label{cor:bounded-subweib}
Every bounded random variable is strictly q-subweibull for all $q>0$.
\end{corollary}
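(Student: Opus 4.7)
The plan is to invoke Definition \ref{def:subweibull} directly, bypassing the three equivalent characterizations in Proposition \ref{prop:subweibull}. Since $X$ is bounded, by definition there exists some constant $M>0$ such that $\vert X\vert\leq M$ almost surely. The key observation is that boundedness of $X$ transfers immediately to boundedness of $\vert X\vert^q$ for any $q>0$, giving $\vert X\vert^q\leq M^q$ almost surely.

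From here the proof is essentially a one-line monotonicity argument. For any $\lambda>0$ and any $q>0$, the monotonicity of the exponential function yields $\exp(\lambda^q\vert X\vert^q)\leq \exp(\lambda^q M^q)$ almost surely, and since the right hand side is a deterministic finite constant, taking expectations preserves the inequality:
\[\E[\exp(\lambda^q\vert X\vert^q)]\leq \exp(\lambda^q M^q)<\infty.\]
Because this holds for every $\lambda>0$ (not just for some $\lambda$ in a neighborhood of zero), the strictness clause of Definition \ref{def:subweibull} is satisfied. Equivalently, one could observe that $R_q=\infty$ in the sense of Definition \ref{def:radius-convergence}.

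There is no real obstacle here; the statement is essentially an immediate consequence of the definitions, and the only minor care needed is to emphasize that the argument works for \emph{every} $\lambda>0$ and \emph{every} $q>0$ simultaneously, which is what distinguishes strict subweibullness from broad subweibullness. No appeal to the deeper equivalences in Proposition \ref{prop:subweibull} or to Corollary \ref{cor:strict-subweib} is required, although one could alternatively deduce the result by noting that $\big(\E[\vert X\vert^p]\big)^{1/p}\leq M$ for all $p\geq 1$ and pushing this through the growth-rate characterization, but this would be an unnecessarily circuitous route.
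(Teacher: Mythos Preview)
Your proof is correct and follows essentially the same approach as the paper: bound $\vert X\vert$ by a constant $M$, apply monotonicity of the exponential to get $\E[\exp(\lambda^q\vert X\vert^q)]\leq \exp(\lambda^q M^q)<\infty$ for all $\lambda>0$ and $q>0$, and conclude strict $q$-subweibullness directly from Definition~\ref{def:subweibull}. The paper's proof is the same one-line argument, so there is nothing to add.
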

\begin{proof}
If $X$ is bounded then there exists $M\geq 0$ such that $\vert X\vert \leq M$. Then $\E[\exp(\lambda^q\vert X\vert^q)]\leq \exp(\lambda^q M^q)<\infty$ for all $\lambda>0$ and $q>0$.
\end{proof}

\begin{corollary} %can be omitted
\label{cor:not-subweib}
If $X$ is not strictly q-subweibull with $q\geq 1$ then it is not r-subweibull for any $r>q$.
\end{corollary}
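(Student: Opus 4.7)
The plan is to prove the contrapositive: if $X$ is r-subweibull for some $r>q$, then $X$ is strictly q-subweibull. Since this directly contradicts the hypothesis that $X$ is not strictly q-subweibull, the original statement follows.

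First I would explicitly set up the contrapositive: suppose toward contradiction that there exists some $r>q$ such that $X$ is r-subweibull. Then I would simply apply Corollary \ref{cor:strict-subweib} with $r$ playing the role of the index called ``q'' in that corollary, and with our target index $q$ playing the role of the index called ``r'' in that corollary. Because $q\geq 1>0$ and $r>q$, we have $q\in(0,r)$, so the hypothesis of Corollary \ref{cor:strict-subweib} is satisfied and we conclude that $X$ is strictly q-subweibull, contradicting our standing assumption.

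There is no real obstacle here; the entire argument is one invocation of the previously established Corollary \ref{cor:strict-subweib} followed by contraposition. The only subtlety worth remarking on in the write-up is that the restriction $q\geq 1$ in the statement is not actually needed for this argument (any $q>0$ would work), but keeping it makes the corollary cleanly complement the earlier strictness result and aligns with the subexponential/heavy-tailed threshold of interest.
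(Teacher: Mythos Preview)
Your proposal is correct and takes a genuinely different route from the paper. The paper argues directly at the level of tail bounds: starting from the failure of strict $q$-subweibullness, it extracts via Corollary~\ref{cor:subweibull} a sequence $t_n\to\infty$ along which $\Pr(|X|>t_n)\exp(\lambda t_n^q)\to\infty$, and then observes that for any $\rho>0$ and $r>q$ one has $\exp(\rho t^r)\geq \exp(\lambda t^q)$ for all large $t$, so the same divergence occurs with exponent $r$. Your approach instead recognizes that Corollary~\ref{cor:not-subweib} is nothing but the contrapositive of Corollary~\ref{cor:strict-subweib} (with the roles of the two indices swapped), so a single citation suffices. Your argument is shorter and makes the logical relationship between the two corollaries transparent; the paper's argument is more self-contained and illustrates concretely why the tail comparison works, which may be pedagogically useful. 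Your observation that the hypothesis $q\geq 1$ is not actually used is also correct, and applies equally to the paper's proof.
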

\begin{proof}
From Corollary \ref{cor:subweibull} we may assume $\exists~\lambda>0$ such that 
\[\limsup_{t\to\infty} \Pr(\vert X\vert>t)\exp(\lambda t^q)=\infty\]
which implies there is an infinite sequence $t_n\to\infty$ such that
\[\lim_{n\to\infty} \Pr(\vert X\vert>t_n)\exp(\lambda t_n^q)=\infty\]
Now let $\rho>0$ and $r>q$. Whenever $t\geq t^\star=(\lambda/\rho)^{1/(r-q)}$, we have $\exp(\rho t^r)\geq \exp(\lambda t^q)$. Let $\{t_m\}$ be the infinite subsequence of $\{t_n\}$ excluding the elements less than $t^\star$. Clearly $t_m\to\infty$ as well. Then 
\[\lim_{m\to\infty} \Pr(\vert X\vert > t_m)\exp(\rho t_m^r)\geq \lim_{m\to\infty} \Pr(\vert X\vert > t_m)\exp(\lambda t_m^q) = \infty \]
which implies $X$ cannot be r-subweibull.
\end{proof}

\subsection{Subweibull properties of the Poisson distribution}

%can be omitted
Corollaries \ref{cor:strict-subweib} and \ref{cor:not-subweib} suggest a hierarchy of distributions based on the heaviness of the tails. Broadly q-subweibull distributions, which have a finite but nonzero radius of convergence ($R_q$), serve as ``critical points'' in the transition between the strictly r-subweibull regime ($r<q$), with $R_q=\infty$ and the not r-subweibull regime ($r>q$) with $R_q=0$. However, the transition from strictly subweibull to not subweibull can be immediate, without passing through the stage of broadly subweibull. A sophisticated example using the moment sequence was suggested by \cite{4907906}. Here we provide a simple example: the Poisson tail is lighter than any exponential tail, but heavier than any weibull tail with $q>1$. 
\begin{proposition}
\label{prop:poi-subweib}
The Poisson distribution is strictly q-subweibull for $q\leq 1$ but not q-subweibull for any $q>1$.
\end{proposition}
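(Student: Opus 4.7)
The proof naturally splits at $q=1$. For $q\leq 1$ I will handle everything via the fact that the Poisson distribution has an MGF on all of $\mathbb{R}$, combined with Corollary \ref{cor:strict-subweib}. For $q>1$ I will show directly that the defining expectation in Definition \ref{def:subweibull} diverges for every $\lambda>0$, by demonstrating that its series representation has individual terms tending to infinity.

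Let $X\sim\mathrm{Poisson}(\mu)$. Since $X\geq 0$, we have $\E[\exp(\lambda\vert X\vert)]=\exp(\mu(e^\lambda-1))<\infty$ for all $\lambda>0$, so $X$ is strictly $1$-subweibull. Corollary \ref{cor:strict-subweib} then upgrades this to strictly $r$-subweibull for every $r\in(0,1)$, finishing the first half of the claim.

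For $q>1$ and arbitrary $\lambda>0$, expand
\[\E[\exp(\lambda^q X^q)]=e^{-\mu}\sum_{k=0}^\infty \frac{\mu^k}{k!}\exp(\lambda^q k^q).\]
By Stirling's formula $\log k!=k\log k-k+O(\log k)$, the logarithm of the $k$-th summand equals
\[\lambda^q k^q \;-\; k\log k \;+\; k(1+\log\mu) \;+\; O(\log k),\]
which tends to $+\infty$ because $k^q$ grows faster than $k\log k$ whenever $q>1$. The summands therefore diverge, the series is infinite, and $X$ fails to be $q$-subweibull.

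The step I expect to require the most care is the $q>1$ direction, where the intuition that the factorial $k!$ ought to dominate any $\exp(\lambda^q k^q)$ is tempting but wrong; Stirling makes the comparison precise and shows that the factorial is only just barely adequate at $q=1$. An alternative route would use Corollary \ref{cor:subweibull}(2) together with the Bell-number-type moment asymptotic $(\E[X^p])^{1/p}\sim p/\log p$, but this shifts the work onto a less self-contained fact about Poisson moments, so the direct series-divergence argument seems cleaner.
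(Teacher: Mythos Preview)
Your argument is correct. The $q\leq 1$ half matches the paper exactly. For $q>1$ you take a genuinely different route: you work directly with Definition~\ref{def:subweibull} and show the series $\sum_k \frac{\mu^k}{k!}\exp(\lambda^q k^q)$ diverges via Stirling, whereas the paper instead bounds the survival function from below by a single point mass, $\Pr(X>t)\geq \Pr(X=\lfloor t\rfloor+1)$, and then invokes the tail characterization of Corollary~\ref{cor:subweibull}. Your approach is shorter and more self-contained, since it avoids the somewhat fiddly Gamma-function bounds and the L'H\^opital computation the paper carries out; the paper's version, on the other hand, yields an explicit lower bound on the tail $\Pr(X>t)$ along the way, which makes the ``heavier than any Weibull with $q>1$'' intuition visible at the level of the survival function itself. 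Either way the crux is the same comparison $k^q$ versus $k\log k$, which you identify cleanly.
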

\begin{proof}
Since the Poisson distribution has a finite MGF with infinite radius of convergence, it is strictly subexponential and by Corollary \ref{cor:strict-subweib} strictly q-subweibull for all $q\leq 1$. Let $X\sim Poi(\mu)$. Without loss of generality assume $t>1$ and let $n=\lfloor t\rfloor +1$ with $t<n\leq t+1$. 
\[\Pr(X>t) = \sum_{j=n}^\infty \Pr(X=j) \geq \Pr(X=n)=\frac{\mu^{n}\exp(-\mu)}{n!} = \frac{\mu^{n}\exp(-\mu)}{n\Gamma(n)}\]
Since $t\Gamma(t)$ is increasing for $t\geq 1$, we have $n\Gamma(n)\leq (t+1)\Gamma(t+1)$. Also, $\Gamma(n)\leq n^n$ for $n\geq 1$. For the $\mu^n$ term, it is increasing for $\mu\geq 1$ and decreasing for $\mu<1$, so $\mu^n\geq \min\{\mu^{t+1},\mu^t\}=\mu^t\min\{\mu,1\}$. Combining these we obtain
\[\Pr(X>t)\geq \frac{\mu^t\min\{\mu,1\}e^{-\mu}}{(t+1)\Gamma(t+1)} = \frac{\mu^t \min\{\mu,1\}e^{-\mu}}{(t+1)(t)\Gamma(t)}\geq \frac{\mu^t \min\{\mu,1\}e^{-\mu}}{(t+1)(t)t^t}\]
To assess whether the tail follows a subweibull rate of decay, choose any $\lambda>0$ and $q>1$, then
\begin{align*}
\limsup_{t\to\infty}& \Pr(X>t)\exp(\lambda t^q)\geq \min\{\mu,1\}e^{-\mu}\lim_{t\to\infty}\frac{\mu^t}{(t+1)(t)t^t}\exp(\lambda t^q)\\
&= \min\{\mu,1\}e^{-\mu}\exp\left[\lim_{t\to\infty}t\log\mu-\log(t+1)-\log t-t\log t+\lambda t^q\right]
\end{align*}
The expression inside brackets is of the form $\infty-\infty$ so we rearrange terms and apply L'Hopital's rule. Define
\begin{align*}
\lim_{t\to\infty}&t\log\mu-\log(t+1)-\log t-t\log t+\lambda t^q\\
&= \lim_{t\to\infty} (t\log t)\left[\lim_{t\to\infty}\frac{\log\mu}{\log t} - \frac{\log(t+1)}{t\log t}-\frac{1}{t}+\frac{\lambda t^q}{t\log t}\right]\\
&= \lim_{t\to\infty} (t\log t)\left[\lim_{t\to\infty}0 - \frac{1/(t+1)}{1+\log t}-(0)+\frac{\lambda qt^{q-1}}{1+\log t}\right]\\
&= \lim_{t\to\infty} (t\log t)\left[\lim_{t\to\infty}\frac{\lambda q(q-1)t^{q-2}}{1/t}\right] = \infty \cdot \infty = \infty
\end{align*}
Therefore
\[\limsup_{t\to\infty} \Pr(X>t)\exp(\lambda t^q)= \infty\]
Since this holds for all $\lambda>0$, $X$ cannot satisfy Corollary \ref{cor:subweibull} and therefore is not q-subweibull for any $q>1$.
\end{proof}

\section{Exponential tilting}
\begin{definition}
\label{def:exp_tilt}
Let $X$ be a random variable with distribution function $F$. If the Laplace transform satisfies $\mathcal{L}_X(-\theta)<\infty$ for some $\theta\neq 0$, then the \textit{exponentially tilted distribution} is given by
\[F_\theta(x) = \int_{-\infty}^x \frac{\exp(\theta t)}{\mathcal{L}_X(-\theta)}dF(t)\]
\end{definition}
We adopt the convention of using $-\theta$ instead of $\theta$ so that the interpretation of the tilting parameter is consistent with other works that assume $X$ has an MGF, in which case one could equivalently require $M_X(\theta)<\infty$. 

From the Radon-Nikodym theorem, $F_\theta$ is absolutely continuous with respect to $F$. Since the density function $e^{\theta x}/\mathcal{L}_X(-\theta)$ is also strictly positive, exponential tilting does not change the support. Generally speaking it is possible to produce a subexponential distribution by exponential tilting of any distribution with at least one light tail.

\begin{proposition}
\label{prop:exp-tilt-general-exp}
If $X\sim F$ is a random variable having at least one light tail then exponential tilting is possible for all $\theta$ in some open interval $(-S,T)$ with $S,T\geq 0$ and $S+T>0$. The resulting tilted distribution $F_\theta$ is subexponential with MGF $M_Z(t)=\mathcal{L}_X(-\theta-t)/\mathcal{L}_X(-\theta)$ finite for all $t\in (-S-\theta,T-\theta)$.
\end{proposition}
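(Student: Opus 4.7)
The plan is to reduce everything to the classical fact that the domain of finiteness of a Laplace transform is an interval, then compute the MGF of $F_\theta$ in closed form.

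First, I define the convergence set $D = \{s \in \mathbb{R} : \mathcal{L}_X(s) < \infty\}$ and show it is a convex subset of $\mathbb{R}$, hence an interval. The quickest route is H\"older's inequality: for $s_1, s_2 \in D$ and $\alpha \in [0,1]$,
\[
\mathcal{L}_X\bigl(\alpha s_1 + (1-\alpha)s_2\bigr) = \E\bigl[e^{-\alpha s_1 X}\, e^{-(1-\alpha)s_2 X}\bigr] \leq \mathcal{L}_X(s_1)^\alpha\, \mathcal{L}_X(s_2)^{1-\alpha} < \infty.
\]
Since $\mathcal{L}_X(0) = 1$, $D$ contains $0$, and the light-tail hypothesis guarantees $D$ contains at least one other point (a positive value if the left tail is light, a negative one if the right tail is light). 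Hence $D$ has non-empty interior, and I write the interior of $-D$ as $(-S, T)$ with $S, T \geq 0$ and $S + T > 0$.

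Next, for any $\theta \in (-S, T)$ the point $-\theta$ lies in the interior of $D$, so $\mathcal{L}_X(-\theta) < \infty$ and Definition \ref{def:exp_tilt} applies. I compute the MGF of $Z \sim F_\theta$ directly from the definition:
\[
M_Z(t) = \int_{-\infty}^\infty e^{tx}\, \frac{e^{\theta x}}{\mathcal{L}_X(-\theta)}\, dF(x) = \frac{\mathcal{L}_X(-\theta - t)}{\mathcal{L}_X(-\theta)}.
\]
This is finite exactly when $-\theta - t \in D$, equivalently when $t + \theta \in -D$, which on the interior translates to $t \in (-S - \theta,\, T - \theta)$.

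Finally, since $-S < \theta < T$, this interval satisfies $-S - \theta < 0 < T - \theta$ and so contains $0$ in its interior. Thus $M_Z$ is finite on an open neighborhood of $0$, meaning $F_\theta$ is subexponential in the sense of Section 2. The only substantive step is establishing the convexity of $D$; everything else is direct algebraic manipulation together with a translation of the convergence interval by $-\theta$.
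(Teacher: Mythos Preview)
Your proof is correct and follows essentially the same line as the paper's: establish that the set of admissible tilting parameters is an open interval $(-S,T)$ containing $0$ on at least one side, compute $M_Z(t)=\mathcal{L}_X(-\theta-t)/\mathcal{L}_X(-\theta)$ directly from the tilting density, and observe that the translated interval $(-S-\theta,\,T-\theta)$ still contains $0$. Your convexity-of-$D$ argument via H\"older's inequality is in fact tidier than the paper's monotonicity step $\E[e^{\theta' X}]\leq \E[e^{\theta X}]$ for $0\le\theta'<\theta$, which as written does not hold pointwise when $X$ can take negative values (though the conclusion is easily repaired by splitting on the sign of $X$).
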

\begin{proof}
Without loss of generality assume the right tail is light so $\mathcal{L}_X(-\theta)<\infty$ for some $\theta>0$. For all $\theta'\in [0,\theta)$, 
\[\mathcal{L}_X(-\theta')=\E[\exp(\theta'X)] \leq \E[\exp(\theta X)]<\infty\]
Set $T=\sup\{\theta:~\mathcal{L}_X(-\theta)<\infty\}>0$. If $X$ has a heavy left tail then $\mathcal{L}_X(-\theta)=\infty$ for all $\theta<0$, so the interval of convergence is $(-S,T)$ with $S=0$. If $X$ has a light left tail then we can set $S=-\inf\{\theta:\mathcal{L}_X(-\theta)<\infty\}>0$. This establishes the interval is $(-S,T)$ with $S,T\geq 0$ and $S+T>0$. Let $Z\sim F_\theta$ follow the tilted distribution with $\theta\in (-S,T)$. Its Laplace transform is
\begin{align*}
\mathcal{L}_Z(t)&=\E[\exp(-tZ)] = \int_{-\infty}^\infty \exp(-tz)dF_\theta(z) = \int_{-\infty}^\infty \exp(-tx)\frac{\exp(\theta x)}{\mathcal{L}_X(-\theta)}dF(x) \\
&= \E[\exp(-(t-\theta)X)]/\mathcal{L}_X(-\theta) = \mathcal{L}_X(-(\theta-t))/\mathcal{L}_X(-\theta)
\end{align*}
This is finite when $\theta-t \in (-S,T)$ or equivalently $t \in (-T+\theta, S+\theta)$. Since $\theta\in (-S,T)$, the interval of convergence for $\mathcal{L}_Z(t)$ is an open interval containing zero, which proves $Z$ is subexponential and has the MGF
\[M_Z(t)=\mathcal{L}_Z(-t)=\mathcal{L}_X(-\theta-t)/\mathcal{L}_X(-\theta)\]
which is finite on the interval $t\in (-S-\theta, T-\theta)$. 
\end{proof}

As an example, if $X\sim F$ is a nonnegative, heavy tailed random variable ($T=0$), its left tail is strictly subexponential ($S=\infty$) so exponential tilting is possible for all $\theta<0$. By Proposition \ref{prop:exp-tilt-general-exp} the resulting tilted distribution is subexponential and hence has lighter tails than the original distribution. On the other hand, if $X$ is broadly subexponential, exponential tilting produces another broadly subexponential distribution, with a shifted interval of convergence.

While exponential tilting can alter the tail behavior of heavy tailed and broadly subexponential distributions, it does not affect the tail behavior of q-subweibull distributions with lighter than exponential tails (i.e., $q>1$).

\begin{lemma}
\label{lem:exp-tilt-nonneg}
Preservation of nonnegative subweibull tails under exponential tilting. Let $\theta$ be any real number.
% \begin{enumerate}
If $X\sim F$ is nonnegative and q-subweibull ($q>1$), then the exponentially tilted variable $Z\sim F_\theta$ is also nonnegative and q-subweibull with the same radius of convergence.
\begin{enumerate}%[label=(\alph*)]
  \item $\E[\exp(\lambda^q X^q)]<\infty$ for all $\lambda\in [0,R_q)$ implies $\E[\exp(\lambda^q Z^q)]<\infty$ for all $\lambda\in [0,R_q)$.
  \item $\E[\exp(\lambda^q X^q)]=\infty$ for all $\lambda>R_q$ implies $\E[\exp(\lambda^q Z^q)]=\infty$ for all $\lambda>R_q$. 
\end{enumerate}
 % \item If $X\sim F$ is strictly q-subweibull ($q\geq1$), the exponentially tilted variable $Z\sim F_\theta$ is also strictly q-subweibull.
 % \item If $X\sim F$ is not q-subweibull ($q>1$), then $Z\sim F_\theta$ is also not q-subweibull.
% \end{enumerate}
\end{lemma}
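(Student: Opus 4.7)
The plan is to first verify that exponential tilting is well-defined for every $\theta\in\mathbb{R}$ when $X\geq 0$ is q-subweibull with $q>1$. Because $q>1$, for any fixed $\theta$ and any $\lambda\in(0,R_q)$ the function $\lambda^q x^q-|\theta|x$ attains a finite minimum on $[0,\infty)$, so there is a constant $C$ with $|\theta|x\leq \lambda^q x^q+C$ for all $x\geq 0$. Hence $\mathcal{L}_X(-\theta)=\E[\exp(\theta X)]\leq e^C\,\E[\exp(\lambda^q X^q)]<\infty$, so Definition \ref{def:exp_tilt} applies and $\mathcal{L}_X(-\theta)\in(0,\infty)$. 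Using the definition of $F_\theta$ directly,
\[\E[\exp(\lambda^q Z^q)]=\frac{1}{\mathcal{L}_X(-\theta)}\E\bigl[\exp(\lambda^q X^q+\theta X)\bigr],\]
so both claims reduce to analyzing $\E[\exp(\lambda^q X^q+\theta X)]$ by comparison with $\E[\exp((\lambda')^q X^q)]$ for a suitably chosen auxiliary $\lambda'$.

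For claim (1), I would fix $\lambda\in[0,R_q)$ and pick any $\lambda'\in(\lambda,R_q)$. Since $q>1$ and $(\lambda')^q>\lambda^q$, the function $((\lambda')^q-\lambda^q)x^q-\theta x$ is bounded below on $[0,\infty)$, giving a constant $C_1$ with $\lambda^q x^q+\theta x\leq(\lambda')^q x^q+C_1$ for all $x\geq 0$. This yields $\E[\exp(\lambda^q X^q+\theta X)]\leq e^{C_1}\,\E[\exp((\lambda')^q X^q)]<\infty$, and dividing by $\mathcal{L}_X(-\theta)$ establishes the finiteness of $\E[\exp(\lambda^q Z^q)]$.

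For claim (2), I would fix $\lambda>R_q$ and pick $\lambda''\in(R_q,\lambda)$, so that $\E[\exp((\lambda'')^q X^q)]=\infty$. If $\theta\geq 0$, then $\exp(\lambda^q x^q+\theta x)\geq \exp(\lambda^q x^q)$ on $[0,\infty)$ and divergence is immediate. If $\theta<0$, the same $q>1$ dominance argument produces a finite $M>0$ with $\lambda^q x^q+\theta x\geq(\lambda'')^q x^q$ for all $x\geq M$ (explicitly, any $M\geq(|\theta|/(\lambda^q-(\lambda'')^q))^{1/(q-1)}$ works); since $\E[\exp((\lambda'')^q X^q)\mathbf{1}_{X<M}]\leq\exp((\lambda'')^q M^q)<\infty$, the tail contribution from $\{X\geq M\}$ is infinite, and this divergence transfers to $\E[\exp(\lambda^q X^q+\theta X)]$, hence to $\E[\exp(\lambda^q Z^q)]$.

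The main obstacle is the lower-bound case $\theta<0$ in part (2), which forces a truncation argument rather than a pointwise comparison on all of $[0,\infty)$; everywhere else the argument is a series of elementary inequalities enabled by the fact that for $q>1$ the term $x^q$ absorbs any linear perturbation $\theta x$ up to an additive constant. The hypothesis $q>1$ is used in exactly this way, which is why the lemma fails (as the preceding discussion of broadly subexponential distributions shows) once $q\leq 1$.
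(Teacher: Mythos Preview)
Your proof is correct and follows essentially the same approach as the paper's: both pick an auxiliary exponent strictly between $\lambda$ and $R_q$ and exploit that for $q>1$ the term $x^q$ dominates the linear perturbation $\theta x$. The only cosmetic difference is that in part~(1) you absorb the linear term into a single additive constant $C_1$ valid on all of $[0,\infty)$, whereas the paper splits into the cases $\theta\le 0$ (trivial) and $\theta>0$ (integral split at an explicit threshold $x^\star$); your packaging is slightly cleaner but the content is identical.
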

\begin{proof}
If $X$ is q-subweibull with $q>1$ then by Corollary \ref{cor:strict-subweib} it is strictly subexponential and $\mathcal{L}_X(-\theta)<\infty$ for all $\theta\in\mathbb{R}$. Let $Z\sim F_\theta$. The MGF of $Z^q$ is
\[\E[\exp(\lambda^q Z^q)] = \frac{\int \exp(\lambda^q x^q + \theta x) dF(x)}{\mathcal{L}_X(-\theta)}\]

$(1)$ case of $\lambda<R_q$. If $\theta\leq 0$ then 
\[\int \exp(\lambda^q x^q + \theta x) dF(x)\leq \int \exp(\lambda^q x^q + 0) dF(x) = \E[\exp(\lambda^q X^q)]<\infty\]
If $\theta>0$, choose $\rho\in (\lambda,R_q)$ and define
\[x^\star = \left(\frac{\theta}{\rho^q-\lambda^q}\right)^{\frac{1}{q-1}}\]
Then for $x>x^\star$, $\lambda^q x^q + \theta x\leq \rho^q x^q$. Therefore,
\begin{align*}
\int \exp(\lambda^q x^q + \theta x) dF(x) &= \int_0^{x^\star} \exp(\lambda^q x^q + \theta x) dF(x) + \int_{x^\star}^\infty \exp(\lambda^q x^q + \theta x) dF(x)\\
&\leq \int_0^{x^\star}\exp\big(\theta x^\star + \lambda^q (x^\star)^q\big)dF(x) + \int_{x^\star}^\infty \exp(\rho^q x^q) dF(x)\\
&\leq \exp\big(\theta x^\star + \lambda^q (x^\star)^q\big) \Pr(X\leq x^\star) + \int_0^\infty \exp(\rho^q x^q) dF(x)\\
&<\infty
\end{align*}

$(2)$ case of $\lambda>R_q$. If $\theta\geq 0$ then
\[\int \exp(\lambda^q x^q + \theta x) dF(x)\geq \int \exp(\lambda^q x^q + 0) dF(x) = \E[\exp(\lambda^q X^q)] = \infty\]
If $\theta<0$. Choose $\rho\in (R_q,\lambda)$ and define
\[x^\star = \left(\frac{-\theta}{\lambda^q-\rho^q}\right)^{\frac{1}{q-1}}\]
Then for $x>x^\star$, $\lambda^q x^q + \theta x\geq \rho^q x^q$. Therefore,
\begin{align*}
\int \exp(\lambda^q x^q + \theta x) dF(x) &= \int_0^{x^\star} \exp(\lambda^q x^q + \theta x) dF(x) + \int_{x^\star}^\infty \exp(\lambda^q x^q + \theta x) dF(x)\\
&\geq \int_0^{x^\star}\exp(\lambda^q x^q + \theta x)dF(x) + \int_{x^\star}^\infty \exp(\rho^q x^q) dF(x)
\end{align*}
The first term is finite. We will show the second term is infinite. By assumption,
\begin{align*}
\int_0^\infty \exp(\rho^q x^q) dF(x) &= \infty\\
&= \int_0^{x^\star} \exp(\rho^q x^q) dF(x)+\int_{x^\star}^\infty \exp(\rho^q x^q) dF(x)
\end{align*}
But 
\[\int_0^{x^\star} \exp(\rho^q x^q) dF(x)\leq \exp\big(\rho^q (x^\star)^q\big)\Pr(X\leq x^\star)<\infty\]
Therefore
\[\int_{x^\star}^\infty \exp(\rho^q x^q) dF(x) = \infty\]
implying
\[\int \exp(\lambda^q x^q + \theta x) dF(x)=\infty\]
as well.

% $(2)$ For $q=1$ apply Proposition \ref{prop:exp-tilt-general-exp} with $R=\infty$ and $S=\infty$. For $q>1$ apply $(1a)$ with $R_q=\infty$. 

% $(3)$ This is a direct corollary of $(1b)$ obtained in the case of $R_q=0$.
\end{proof}

We now extend Lemma \ref{lem:exp-tilt-nonneg} to general random variables. 

\begin{theorem}
\label{thm:exp-tilt-general}
Preservation of subweibull tails under exponential tilting. Let $\theta$ be any real number.
\begin{enumerate}
 \item If $X\sim F$ is q-subweibull ($q>1$) with radius of convergence $R_q$, then the exponentially tilted variable $Z\sim F_\theta$ is also q-subweibull and has the same radius of convergence.
   % \begin{enumerate}[label=(\alph*)]
   % \item $\E[\exp(\lambda^q \vert X\vert^q)]<\infty$ for all $\lambda\in [0,R_q)$ implies $\E[\exp(\lambda^q \vert Z\vert^q)]<\infty$ for all $\lambda\in [0,R_q)$.
   % \item $\E[\exp(\lambda^q \vert X\vert^q)]=\infty$ for all $\lambda>R_q$ implies $\E[\exp(\lambda^q \vert Z\vert^q)]=\infty$ for all $\lambda>R_q$. 
   % \end{enumerate}
 \item If $X\sim F$ is strictly q-subweibull ($q\geq1$), the exponentially tilted variable $Z\sim F_\theta$ is also strictly q-subweibull.
 \item If $X\sim F$ is not q-subweibull ($q>1$), then $Z\sim F_\theta$ is also not q-subweibull.
\end{enumerate}
\end{theorem}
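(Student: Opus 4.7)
The plan is to reduce Theorem \ref{thm:exp-tilt-general} to Lemma \ref{lem:exp-tilt-nonneg} via the positive/negative decomposition of Lemma \ref{lem:subweibull-split}. The preliminary observation is that exponential tilting commutes with this decomposition: writing $p=\Pr(X<0)$, $A=[-X\mid X<0]$, $B=[X\mid X\geq 0]$, and analogously $A_Z,B_Z$ for $Z\sim F_\theta$, a direct calculation of conditional densities using $dF_\theta(x)=e^{\theta x}/\mathcal{L}_X(-\theta)\,dF(x)$ yields that $B_Z$ is distributed as the $\theta$-tilt of $B$ while $A_Z$ is the $(-\theta)$-tilt of $A$. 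The one-sided transforms $\mathcal{L}_B(-\theta)$ and $\mathcal{L}_A(\theta)$ required for these tilts are automatic from $\mathcal{L}_X(-\theta)<\infty$, which by Corollary \ref{cor:strict-subweib} holds for every real $\theta$ under the hypotheses of each part. The edge cases $p\in\{0,1\}$ reduce directly to Lemma \ref{lem:exp-tilt-nonneg} applied to $X$ or $-X$, so I assume $p\in(0,1)$ below.

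With this in hand, Part 1 is a two-line chain: Lemma \ref{lem:subweibull-split} gives $R_q(X)=\min\{R_q(A),R_q(B)\}$; Lemma \ref{lem:exp-tilt-nonneg} applied to $A$ (with parameter $-\theta$) and $B$ (with parameter $\theta$) preserves each radius; and a second application of Lemma \ref{lem:subweibull-split} to $Z$ gives $R_q(Z)=\min\{R_q(A_Z),R_q(B_Z)\}=R_q(X)$. Part 3 is then its contrapositive: were $Z$ q-subweibull with $q>1$, Corollary \ref{cor:strict-subweib} would give $\mathcal{L}_Z$ finite everywhere, so the tilt of $Z$ by $-\theta$ is well-defined, and a short density calculation shows it recovers $X$ exactly; applying Part 1 to $Z$ would then make $X$ q-subweibull, contradicting the hypothesis.

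For Part 2 with $q>1$, strictly q-subweibull corresponds to $R_q=\infty$, which is preserved by Part 1. The case $q=1$ falls outside the scope of Lemma \ref{lem:exp-tilt-nonneg} and needs a direct one-liner: the bound $\lambda|x|+\theta x\leq(\lambda+|\theta|)|x|$ inside $\E[\exp(\lambda|Z|)]$ makes this expectation a finite multiple of $\E[\exp((\lambda+|\theta|)|X|)]$, which is finite for every $\lambda>0$ by strict subexponentiality of $X$. The only step requiring genuine care is the density bookkeeping in the commutation claim, particularly the sign flip of the tilt parameter on the negative part; once this is established, the theorem reduces almost mechanically to the earlier lemmas.
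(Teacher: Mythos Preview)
Your proposal is correct and follows essentially the same route as the paper: decompose $X$ into its negative and nonnegative parts via Lemma \ref{lem:subweibull-split}, observe that exponential tilting commutes with this decomposition (with the sign flip on the negative part), apply Lemma \ref{lem:exp-tilt-nonneg} to each piece, and reassemble using Lemma \ref{lem:subweibull-split} again. Your handling of Part (3) by contrapositive (tilting $Z$ back by $-\theta$ and applying Part (1) to $Z$) and of Part (2) for $q=1$ via the direct bound $\lambda|x|+\theta x\le(\lambda+|\theta|)|x|$ differ only cosmetically from the paper, which instead reads off the $R_q=0$ case of the Part (1) argument and invokes Proposition \ref{prop:exp-tilt-general-exp} with $S=T=\infty$, respectively; both variants are equally valid.
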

\begin{proof}
$(1)$ By Corollary \ref{cor:strict-subweib}, $X$ is strictly subexponential so $\mathcal{L}_X(-\theta)<\infty$ for all $\theta\in\mathbb{R}$. Choose any arbitrary $\theta$ and set $M_1=\mathcal{L}_X(-\theta)$. Define nonnegative random variables $A=[X\given X<0]$ and $B=[X\given X\geq 0]$ with distributions $F^-$ and $F^+$, respectively. By Lemma \ref{lem:subweibull-split} both $A$ and $B$ are q-subweibull and strictly subexponential. Let $R_{qa}$ and $R_{qb}$ be the radii of convergence of $A$ and $B$, respectively. Let $p=\Pr(X<0)$ and assume $p\notin \{0,1\}$ (otherwise simply apply Lemma \ref{lem:exp-tilt-nonneg} to $X$ or $-X$). Note that 
\begin{equation}
\label{eq:decompose-laplace}
M_1 = \mathcal{L}_A(\theta)p + \mathcal{L}_B(-\theta)(1-p)
\end{equation}
% \E[\exp(
We have
\begin{align*}
\E[\exp(\lambda^q\vert Z\vert^q)] &= \int_{-\infty}^\infty \exp(\lambda^q \vert z\vert^q)dF_\theta(z) = \int_{-\infty}^\infty \exp(\lambda^q\vert x\vert^q)\frac{\exp(\theta x)}{M_1}dF(x)\\
&= \frac{\E[\exp(\lambda^q\vert X\vert^q + \theta X)]}{M_1}\\
&= \frac{p}{M_1}\E[\exp(\lambda^q A^q - \theta A)] + \frac{(1-p)}{M_1}\E[\exp(\lambda^q B^q + \theta B)]\\
&= \frac{p \mathcal{L}_{A}(\theta)}{M_1}\int_0^\infty \exp(\lambda^q x^q)\frac{\exp(-\theta x)}{\mathcal{L}_{A}(\theta)}dF^-(x) \ldots\\
&~\ldots+ \frac{(1-p)\mathcal{L}_{B}(-\theta)}{\mathcal{L}_X(-\theta)}\int_0^\infty \exp(\lambda^q x^q)\frac{\exp(\theta x)}{\mathcal{L}_{B}(-\theta)}dF^+(x)\\
&= (\tilde{p}) \int_0^\infty \exp(\lambda^q z^q)dF_{(-\theta)}^-(z) + (1-\tilde{p}) \int_0^\infty \exp(\lambda^q z^q)dF_{\theta}^+(z)\\
&= \tilde{p} \E[\exp(\lambda^q U^q)]+(1-\tilde{p})\E[\exp(\lambda^q V^q)]
\end{align*}
where (see Equation \ref{eq:decompose-laplace}), $\tilde{p}=p\mathcal{L}_A(\theta)/M_1$ so that $\tilde{p}\in (0,1)$. The nonnegative random variable $U$ is distributed as $F_{(-\theta)}^-$, which is the exponentially tilting of $A~F^-$ by $-\theta$ and $V\sim F_\theta^+$ is similarly defined as the exponential tilting of $B\sim F^+$. By Lemma \ref{lem:exp-tilt-nonneg}, this implies $U$ and $V$ are q-subweibull with radii of convergence $R_{qa}$ and $R_{qb}$, respectively. Let $R_{qz}$ be the radius of convergence of $Z$. Note that 
\begin{align*}
\Pr(Z\geq 0) &= \int_0^\infty dF_\theta(z) = \int_0^\infty \frac{\exp(\theta x)}{\mathcal{L}_X(-\theta)}dF(x) = \frac{\E[\exp(\theta X)\given X\geq 0]\Pr(X\geq 0)}{M_1}\\
&= \frac{\E[\exp(\theta B)](1-p)}{M_1} = \frac{\mathcal{L}_B(-\theta) (1-p)}{M_1}\\
&= 1-\tilde{p}
\end{align*}
So $\Pr(Z<0)=\tilde{p}$. By Lemma \ref{lem:subweibull-split} this implies $Z$ is q-subweibull with radius of convergence $\min\{R_{qa},R_{qb}\}$, which is also the radius of convergence of $X$. 

$(2)$ For $q=1$ apply Proposition \ref{prop:exp-tilt-general-exp} with $S=\infty$ and $T=\infty$. For $q>1$, apply $(1)$ with $R_q=\infty$. 

$(3)$ This is a direct corollary of $(1)$ obtained in the case of $R_q=0$.
\end{proof}

\section{Discussion}

The theory of subexponential and subgaussian distributions is a key prerequisite to many results in nonparametric and nonasymptotic statistical inference such as concentration inequalities. A comprehensive overview with applications to high-dimensional machine learning problems is provided by \cite{kuchibhotlaMovingSubGaussianityHighdimensional2022}. Exponential tilting is used in a variety of statistical areas such as causal inference \cite{mccleanNonparametricEstimationConditional2024} and Monte Carlo sampling \cite{fuhEfficientExponentialTilting2024}. If $F_\theta$ is a tilted distribution, it is a natural exponential family with parameter $\theta$. The exponential families are building blocks for generalized linear models \cite{mccullaghGeneralizedLinearModels1989}. For applications to machine learning see \cite{liTiltedLossesMachine2023,maityUnderstandingNewTasks2023}.

Here, we have provided a brief overview of subweibull distributions. We showed that the Poisson distribution is strictly 1-subweibull but not q-subweibull for any $q>1$. Finally, we detailed the conditions under which the subweibull property is perserved under exponential tilting. 

\section*{Acknowledgements}
Thanks to Arun Kumar Kuchibhotla, Sam Power, Patrick Staples, Valerie Ventura, Larry Wasserman, and Matt Werenski for helpful comments and suggestions.
%Dan Rowe,

% \bibliography{/Users/townesf/documents/research/zotero_library}
\bibliography{refs}

\end{document}